\newtheorem{theorem}{Theorem}
\newtheorem{remark}{Remark}
\newtheorem{lemma}{Lemma}
\newtheorem{corollary}{Corollary}
\newtheorem{proof}{Proof}
\newcommand{\maximize}{\mathop{\rm maximize}\limits}
\newcommand{\argmin}{\mathop{\rm argmin}\limits}
\newcommand{\bb}{\mathbb}
\def\qed{\hfill $\Box$} 
\begin{document}
%
\title{Controllability maximization of large-scale systems using projected gradient method}
%
%
%

\author{Kazuhiro~Sato and Akiko~Takeda
\thanks{K. Sato is with the Department of Mathematical Informatics, 
Graduate School of Information Science and Technology, The University of Tokyo,
 Tokyo 113-8656, Japan,
email: kazuhiro@mist.i.u-tokyo.ac.jp}
\thanks{A. Takeda is with the Department of Creative Informatics,
Graduate School of Information Science and Technology,
The University of Tokyo, Tokyo 113-8656, Japan, and RIKEN Center for
Advanced Intelligence Project, 1-4-1, Nihonbashi, Chuo-ku,
Tokyo 103-0027, Japan,
email: takeda@mist.i.u-tokyo.ac.jp}
}

\maketitle
\thispagestyle{empty}
\pagestyle{empty}

\begin{abstract}
In this work, we formulate two controllability maximization problems for large-scale networked dynamical systems such as brain networks:
The first problem is a sparsity constraint optimization problem with a box constraint.
The second problem is a modified problem of the first problem, in which the state transition matrix is Metzler.
In other words, the second problem is a realization problem for a positive system.
We develop a projected gradient method for solving the problems,
and prove global convergence to a stationary point with locally linear convergence rate.
The projections onto the constraints of the first and second problems are given explicitly.
Numerical experiments using the proposed method provide non-trivial results.
In particular, the controllability characteristic is observed to change with increase in the parameter specifying sparsity,
and the change rate appears to be dependent on the network structure.
\end{abstract}

\begin{IEEEkeywords}
Controllability, large-scale system, projected gradient method, sparsity
\end{IEEEkeywords}

%
\IEEEpeerreviewmaketitle

\section{Introduction} \label{sec1}
%
%
%
%
\IEEEPARstart{C}{ontrollability}, which refers to the possibility to change the present network state to a desired state is a fundamental concept in large-scale networked dynamical systems \cite{gu2015controllability, karrer2020practical, kim2018role, li2017fundamental, liu2011controllability, nepusz2012controlling, olshevsky2014minimal, yan2017network, yuan2013exact}.
Although several real networks are nonlinear \cite{liu2016control, tang2018colloquium, whalen2015observability, zanudo2017structure}, there are no general principles to determine control inputs for a general nonlinear system to reach a desired state.
Moreover, in the context of neural systems, the assumption that a networked system is linear is reasonable,
because the baseline firing rates of the neurons pertaining to the case with no stimulation are considerably lower than those during stimulation and far from the saturation case \cite{galan2008network}.
Consequently, several researchers \cite{gu2015controllability, karrer2020practical, kim2018role, li2017fundamental, liu2011controllability, nepusz2012controlling, olshevsky2014minimal, yan2017network, yuan2013exact} considered the following linear system that is an approximation around an equilibrium point of a nonlinear system:
\begin{align}
\dot{x}(t) = Ax(t) + Bu(t), \label{system}
\end{align}
where $x(t)\in {\bb R}^n$ and $u(t)\in {\bb R}^m$ denote the state and input vectors, respectively; $A\in {\bb R}^{n\times n}$ is a fixed constant matrix that reflects a network structure; and $B\in {\bb R}^{n\times m}$ can be designed to suit the system requirements.
In other words, we can adjust the influence of input $u$ to the autonomous dynamical networked system
$\dot{x}(t)=Ax(t)$ by introducing a decision variable $B$.

The authors in \cite{clark2017submodularity, romao2018distributed, summers2016submodularity} considered
a matrix $B_S$ instead of any $B$,
where $S\subset \{1,2,\ldots, M\}$ specifies the vectors from $M (\geq m)$ candidate column vectors of $B$,
and addressed the following combinatorial problem.
\begin{align}
\maximize_{S\subset \{1,2,\ldots, M\},\, |S|=m}\quad f(B_S), \label{pro1}
\end{align}
where $f(B_S)$ denotes an index of controllability.
In particular, in \cite{clark2017submodularity, summers2016submodularity}, problem \eqref{pro1} was solved based on submodular optimization,
whereas in \cite{romao2018distributed}, problem \eqref{pro1} was transformed into a linear optimization problem and then solved using a primal-dual distributed algorithm.  
The combinatorial problems related to \eqref{pro1} were considered in \cite{pasqualetti2014controllability, tzoumas2016minimal}.
In addition, the authors of \cite{ikeda2018sparsity} considered a controllability maximization problem in the form of a time-varying actuator problem.
Specifically, $\mathcal{B}V(t)$ was considered as the matrix $B$,
where $\mathcal{B}\in {\bb R}^{n\times m}$ and $V(t)\in \{0,1\}^{m\times m}$ denote a fixed constant matrix and time-varying diagonal matrix, respectively,
and an optimization problem to determine the diagonal entries of $V(t)$ was examined.  
This optimization problem was then completely resolved, as reported in \cite{olshevsky2019relaxation}.
However, the methods to determine the $M$ candidate column vectors of $B$ in \cite{clark2017submodularity, romao2018distributed, summers2016submodularity} and the matrix $\mathcal{B}$ in \cite{ikeda2018sparsity, olshevsky2019relaxation} for a large-scale networked dynamical system remain unclear.

To overcome this limitation, in this work, we consider the controllability maximization problems from a different perspective than those considered in \cite{clark2017submodularity, romao2018distributed, summers2016submodularity, ikeda2018sparsity, olshevsky2019relaxation}.
In particular, we formulate two optimal decision problems of $B$ in system \eqref{system} without using candidates of column vectors considered in \cite{clark2017submodularity, romao2018distributed, summers2016submodularity} and a fixed $\mathcal{B}$ considered in \cite{ikeda2018sparsity, olshevsky2019relaxation}.
The first problem is a sparsity constraint optimization problem with a box constraint that specifies the possible values of each element in $B$.
The second problem is a modified problem of the first problem when $A$ is Metzler, i.e., the off-diagonal elements of $A$ are nonnegative.
Specifically, the possible values of each element in $B$ in the second problem are restricted to nonnegative values.
The second problem is thus a realization problem for a positive system, which is an important problem in the control community \cite{haddad2010nonnegative, sato2020construction}.

The contributions of this work can be summarized as follows.
\begin{itemize}
\item Although the considered problems are essentially combinatorial problems,
we solve the problems using a continuous optimization approach. 
In particular, we propose a simple projected (that is, proximal) gradient method 
to solve the considered problems.
Moreover, we demonstrate the global convergence of the proposed algorithm to a stationary point of the objective function.
Furthermore, considering a result recently reported in \cite{li2018calculus}, it is shown that the convergence rate of the sequences generated by the proposed algorithm is locally linear.

\item The projections onto the constraints of the first and second problems are given explicitly.

\item Numerical experiments using the proposed algorithm provide non-trivial results.
In particular, the controllability characteristic is observed to change with increase in the parameter specifying sparsity,
and the change rate appears to be dependent on the network structure that determines the structure of the matrix $A$.
\end{itemize}

The remaining paper is organized as follows.
The problem formulation is described in Section \ref{Sec2}.
Section \ref{Sec3} describes
the
projected gradient methods used to solve the problems and provides
a proof of global convergence to a stationary point with a convergence rate.
Section \ref{Sec4} describes the experimental results, and
the conclusions are presented in Section \ref{Sec5}.

{\it Notation:} The set of real numbers is denoted by ${\bb R}$.
Given matrices $A, B\in {\bb R}^{n\times m}$, we define $\langle A,B\rangle$ and $\|A\|_F$ as the Euclidean inner product and the Frobenius norm, respectively; i.e.,
$\langle A,B\rangle :={\rm tr}(A^{\top}B)$ and
$\|A\|_F:=\sqrt{\langle A,A\rangle}$,
where
the superscript $\top$ denotes the transpose and ${\rm tr}(A)$ denotes the sum of the diagonal elements of $A$.
$\|A\|_0$ is defined as $l^0$ norm; i.e., $\|A\|_0$ denotes the number of nonzero elements in $A$.
For any matrices $A, B\in {\bb R}^{n\times m}$, we write $A\geq B$ $(A\leq B)$ if all the elements of $A$ are greater (less) than or equal to those of $B$.
The symbol ${\bf E}\in {\bb R}^{n\times m}$ denotes a matrix whose elements are only $1$.
The symbol $I_n\in {\bb R}^{n\times n}$ denotes the identity matrix. 


\section{Problem settings} \label{Sec2}

System \eqref{system} is termed controllable \cite{kalman1960general, kalman1963mathematical} if for any desired final state $x_T$ at any desired final time $T$, there exists an input $u$ such that 
$x_T = \int_0^T \exp(A(T-t))Bu(t)dt$.
That is, there exists an input $u$ satisfying $x(0)=0$ and $x(T)=x_T$.
However, the input $u$ might be required to have a high energy.
In other words, even if system \eqref{system} is controllable, it may be difficult to control the system state in practice.
Thus, it is important to consider a degree of controllability.
In this section, we formulate two controllability maximization problems using a controllability index.

First, we consider a general case that $A$ in \eqref{system} is a fixed matrix that may be unstable,
and the controllability Gramian
\begin{align*}
\mathcal{C}_T(B):= \int_0^{T} \exp(A(T-t))BB^{\top}\exp(A^{\top}(T-t))dt.
\end{align*}
The controllability Gramian can be related to the minimum-energy control problem
\begin{align*}
&{\rm minimize}\quad \int_0^T \|u(t)\|^2 dt\\
&{\rm subject\,\,to}\,\,\,\, \eqref{system},\,\,x(0)=0,\,\,x(T)=x_T,
\end{align*}
where $x_T\in {\bb R}^n$ is any final state.
In fact, if system \eqref{system} is controllable, the minimum energy, i.e., the optimal objective value of the above problem, is given by 
$x_T^{\top}\mathcal{C}^{-1}_T(B)x_T$, 
as shown in \cite{liu2016control}. Moreover, the Rayleigh-Ritz theorem implies that
\begin{align}
\frac{1}{\lambda_{\rm max}(\mathcal{C}_T(B))}\leq \frac{x_T^{\top}\mathcal{C}^{-1}_T(B)x_T}{x_T^{\top}x_T}\leq \frac{1}{\lambda_{\rm min}(\mathcal{C}_T(B))} \label{Rayleigh}
\end{align}
for any $x_T\neq 0$, where $\lambda_{\rm min}(\mathcal{C}_T(B))$ and $\lambda_{\rm max}(\mathcal{C}_T(B))$ denote the minimum and maximum eigenvalues of $\mathcal{C}_T(B)$, respectively.
Because inequality \eqref{Rayleigh} holds, $\lambda_{\rm min}(\mathcal{C}_T(B))$ and ${\rm tr}(\mathcal{C}_T^{-1}(B))$, which denotes the sum of the inverse of all the eigenvalues of $\mathcal{C}_T(B)$, are frequently adopted as the controllability indices \cite{gu2015controllability, pasqualetti2014controllability, summers2016submodularity, wu2018benchmarking}.
Moreover,
${{\rm tr}(\mathcal{C}_T^{-1}(B))} > \frac{n}{{\rm tr} (\mathcal{C}_T(B))}$.
Thus, to decrease ${\rm tr}(\mathcal{C}_T^{-1}(B))$, ${\rm tr} (\mathcal{C}_T(B))$, which is the sum of all eigenvalues of $\mathcal{C}_T(B)$, must be increased.
If ${\rm tr} (\mathcal{C}_T(B))$ is sufficiently large, there exists a direction that can be specified by an eigenvector of $\mathcal{C}_T(B)$ such that the state $x(t)$ of system \eqref{system} 
can move to
the direction with a low input energy.
Consequently, in the existing studies
\cite{romao2018distributed, ikeda2018sparsity, olshevsky2019relaxation}, ${\rm tr} (\mathcal{C}_T(B))$ was adopted as a controllability index.
Note that unlike ${\rm tr}(\mathcal{C}_T^{-1}(B))$, ${\rm tr} (\mathcal{C}_T(B))$ can be defined even if system \eqref{system} is not controllable.

In this study, we use ${\rm tr} (\mathcal{C}_T(B))$ as a controllability index and
consider the following modified problems:
\begin{align}
&{\rm minimize}\quad h(B):=-{\rm tr}(\mathcal{C}_T(B)) \label{problem0}\\
&{\rm subject\,\,to}\,\,\,\, \|B\|_0 \leq s,
\nonumber
\end{align}
where $s$ is a specified nonnegative value.
The constraint $\|B\|_0 \leq s$ ensures that the nonzero elements of $B$ are less than or equal to $s$.
That is, we can determine sparsity of $B$ by specifying $s\in \{1,2,\ldots, nm\}$.

However, in general, $h(B)$ is not bounded below, subject to $\|B\|_0 \leq s$.
That is, a global optimal solution for \eqref{problem0} does not exist.
To demonstrate this aspect, we consider any $\beta>0$ and $\tilde{B}\in {\bb R}^{n\times m}$.
Because $h(\beta\tilde{B}) = \beta^2 h(\tilde{B})$ and $\|\beta \tilde{B}\|_0 = \|\tilde{B}\|_0$,
$h(\tilde{B})<0$ (this relation holds if system \eqref{system} with $B=\tilde{B}$ is controllable) and $\|\tilde{B}\|_0 \leq s$ imply that $\lim_{\beta\rightarrow \infty} h(\beta \tilde{B})=-\infty$ and $\lim_{\beta\rightarrow \infty} \|\beta \tilde{B}\|_0 \leq s$.

To guarantee the existence of an optimal solution, we consider the following problem with a box constraint.

\begin{framed}
\noindent
Problem 1: Given any matrix $A\in {\bb R}^{n\times n}$, $T>0$, and $s\in \{1,2,\ldots, nm\}$, find $B\in {\bb R}^{n\times m}$ that solves
\vspace{-0.5em}
\begin{align}
&{\rm minimize}\quad h(B) \nonumber\\
&{\rm subject\, to}\quad \|B\|_0 \leq s,\,\, -{\bf E}\leq B\leq {\bf E}.\nonumber
\end{align}
\vspace{-2em}
\end{framed}

The above discussion implies that
if $B^*$ is an optimal solution to Problem 1, $\beta B^*$ is that to a modified problem in which $-{\bf E}\leq B\leq {\bf E}$ in Problem 1 is replaced with $-\beta{\bf E}\leq B\leq \beta{\bf E}$,
where $\beta>0$.

Next, we consider a positive case in which $A$ in \eqref{system} is a fixed Metzler matrix that may be unstable.
Positive systems with Metzler matrix $A$ and non-negative matrix $B$ are important, as mentioned in Section \ref{sec1}.
To realize a positive system when $A$ is Metzler, we consider the following problem.

\begin{framed}
\noindent
Problem 2: Given any Metzler matrix $A\in {\bb R}^{n\times n}$, $T>0$, and $s\in \{1,2,\ldots, nm\}$, find $B\in {\bb R}^{n\times m}$ that solves
\vspace{-0.5em}
\begin{align}
&{\rm minimize}\quad h(B) \nonumber\\
&{\rm subject\, to}\quad \|B\|_0 \leq s,\,\, 0\leq B\leq {\bf E}.\nonumber
\end{align}
\vspace{-2em}
\end{framed}

That is, we replace the constraint $-{\bf E}\leq B\leq {\bf E}$ in Problem 1 with $0\leq B\leq {\bf E}$.

\begin{remark} \label{remark1}
The $l^0$ norm constraint is frequently replaced with the $l^1$ norm constraint when an objective function and other constraints are convex,
because the modified problem then becomes convex \cite{candes2005decoding, tibshirani1996regression}.
However, our objective function $h$ is non-convex, as shown in Section \ref{Sec3}.
Thus, even if we replace $l^0$ with $l^1$, the modified problems are non-convex.
Hence, in this study, we do not replace $l^0$ with $l^1$.
\end{remark}

\begin{remark} \label{remark2}
The objective values at the global optimal solutions to Problems 1 and 2 monotonically increase as $s$ increases.
That is, when we use the global optimal solutions, controllability index $-h(B)$ increases as $s$ increases.
\end{remark}


\section{Projected gradient methods for Problems 1 and 2}  \label{Sec3}

In this section, we develop projected gradient methods for solving Problems 1 and 2.
To this end, we consider 
\begin{align}
&{\rm minimize}\quad \mathcal{I}_{Z_s \cap W}(B) + h(B), \label{problem1} \\
&{\rm minimize}\quad \mathcal{I}_{Z_s \cap W_{[0,1]}}(B) + h(B), \label{problem2}
\end{align}
where \eqref{problem1} and \eqref{problem2} are unconstrained problems equivalent to Problems 1 and 2, respectively, 
\begin{align*}
\mathcal{I}_{\mathcal{S}}(B):= \begin{cases}
0\quad\,\,\, (B\in \mathcal{S})\\
\infty\quad (B\not\in \mathcal{S})
\end{cases}
\end{align*}
is the indicator function of any set $\mathcal{S}$, and
\begin{align*}
Z_s &:= \{ B\in {\bb R}^{n\times m}\,|\, \|B\|_0\leq s\}, \\
W &:= \{ B\in {\bb R}^{n\times m}\,|\, -{\bf E}\leq B\leq {\bf E}\}, \\
W_{[0,1]} &:= \{ B\in {\bb R}^{n\times m}\,|\, 0\leq B\leq {\bf E}\}.
\end{align*}

We first note that
\begin{align}
h(B) &= - \int_0^T {\rm tr}(\exp(A(T-t))BB^{\top}\exp(A^{\top}(T-t))) dt \nonumber\\
&= -\int_0^T {\rm tr}(BB^{\top}\exp(A^{\top}(T-t))\exp(A(T-t))) dt \nonumber\\
&= {\rm tr}(BB^{\top}H(A,T)), \label{lem_h}
\end{align}
where 
\begin{align*}
H(A,T):=-\int_0^T \exp (A^{\top}(T-t)) \exp(A(T-t)) dt.
\end{align*}
Using \eqref{lem_h}, we prove the following theorem.

\begin{theorem} \label{thm1}
The function $h(B)$ is strictly concave, and the gradient is
\begin{align}
\nabla h(B) = 2H(A,T) B. \label{nabla_h}
\end{align}
\end{theorem}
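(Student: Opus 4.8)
The plan is to reduce everything to a single structural fact: in the expression \eqref{lem_h}, $h(B)={\rm tr}(BB^{\top}H(A,T))$, the matrix $H(A,T)$ is symmetric and negative definite. Both assertions of the theorem then follow from standard properties of trace-quadratics. First I would analyze $H(A,T)$ directly. Setting $M(t):=\exp(A(T-t))$, the integrand is $\exp(A^{\top}(T-t))\exp(A(T-t))=M(t)^{\top}M(t)$, which is manifestly symmetric, so $H(A,T)$ is symmetric. For definiteness, fix any $x\neq 0$ and compute
\begin{align*}
x^{\top}H(A,T)x = -\int_0^T \|M(t)x\|^2\,dt.
\end{align*}
Since the matrix exponential $M(t)$ is invertible for every $t$, we have $M(t)x\neq 0$, so the integrand is strictly positive (and continuous in $t$), giving $x^{\top}H(A,T)x<0$. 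Hence $H(A,T)$ is symmetric negative definite.

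For the gradient I would expand $h$ to first order using \eqref{lem_h}. A direct computation with the cyclic property of the trace, together with the symmetry $H(A,T)^{\top}=H(A,T)$ just established, yields
\begin{align*}
h(B+V)-h(B) = 2\langle H(A,T)B,\,V\rangle + {\rm tr}(VV^{\top}H(A,T)).
\end{align*}
The linear-in-$V$ part equals $\langle 2H(A,T)B,\,V\rangle$, so by definition of the gradient $\nabla h(B)=2H(A,T)B$, which is \eqref{nabla_h}.

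Strict concavity then comes for free from the same expansion, because its quadratic part is independent of $B$: the Hessian of $h$ acts as $V\mapsto 2H(A,T)V$, and its associated quadratic form is $2\,{\rm tr}(V^{\top}H(A,T)V)=2\sum_j v_j^{\top}H(A,T)v_j<0$ for every $V\neq 0$, where $v_j$ denotes the $j$-th column of $V$, precisely because $H(A,T)$ is negative definite. A twice-differentiable function whose Hessian is everywhere negative definite is strictly concave, which completes the argument. The only genuinely substantive step is the definiteness of $H(A,T)$: one must recognize the integrand as the Gram matrix $M(t)^{\top}M(t)$ and invoke invertibility of the matrix exponential to exclude degeneracy. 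Everything after that reduces to routine matrix calculus and the elementary fact that a negative-definite Hessian certifies strict concavity.
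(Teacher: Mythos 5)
Your proof is correct and takes essentially the same route as the paper: both read off the gradient and Hessian from the trace-quadratic form \eqref{lem_h} and deduce strict concavity from the fact that $H(A,T)$ is symmetric negative definite. The only difference is presentational — you use an exact expansion of $h(B+V)$ instead of directional derivatives, and you explicitly verify the negative definiteness of $H(A,T)$ (via $x^{\top}H(A,T)x=-\int_0^T\|\exp(A(T-t))x\|^2dt<0$), a fact the paper asserts without proof.
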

\begin{proof}
From \eqref{lem_h},
the directional derivative of $h$ at $B$ along $B'$ is given by
${\rm D}h(B)[B'] = 2{\rm tr} \left(B'^{\top} H(A,T)B\right)$.
Hence, the gradient of $h(B)$ is given by \eqref{nabla_h}.
Also, the Hessian of $h$ at any $B\in {\bb R}^{n\times m}$ is given by
${\rm Hess}\,h(B)[B'] = 2H(A,T)B'$.
Because $H(A,T)$ is a symmetric negative definite matrix,
$\langle B', {\rm Hess}\,h(B)[B']\rangle <0$
for any $B'\in {\bb R}^{n\times m}\backslash\{0\}$.
Hence, $h(B)$ is strictly concave. \qed
\end{proof}

\noindent
The following corollary follows from \eqref{nabla_h}.
\begin{corollary} \label{cor1}
The gradient $\nabla h$ is $L(A,T)$-Lipschitz continuous,
where
\begin{align}
L(A,T):=2\int_0^T \|\exp(A(T-t))\|_F^2 dt. \label{LAT}
\end{align}
That is,
\begin{align}
\|\nabla h(B_1)-\nabla h(B_2)\|_F \leq L(A,T) \|B_1-B_2\|_F, \label{Lipschitz}
\end{align}
where $B_1$ and $B_2$ are any real $n\times m$ matrices.
\end{corollary}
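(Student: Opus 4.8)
The plan is to exploit the fact that, by Theorem~\ref{thm1}, the gradient $\nabla h(B) = 2H(A,T)B$ is \emph{linear} in $B$, so that $\nabla h(B_1) - \nabla h(B_2) = 2H(A,T)(B_1 - B_2)$ and the Lipschitz estimate \eqref{Lipschitz} reduces to bounding the Frobenius norm of a single matrix product. First I would apply the submultiplicativity of the Frobenius norm, $\|H(A,T)(B_1-B_2)\|_F \leq \|H(A,T)\|_F\,\|B_1-B_2\|_F$, which immediately yields \eqref{Lipschitz} with constant $2\|H(A,T)\|_F$. It then only remains to verify that $\|H(A,T)\|_F \leq \tfrac{1}{2}L(A,T) = \int_0^T \|\exp(A(T-t))\|_F^2\,dt$, with $L(A,T)$ as in \eqref{LAT}.

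Next I would estimate $\|H(A,T)\|_F$ by moving the norm inside the integral. Since $H(A,T) = -\int_0^T \exp(A^{\top}(T-t))\exp(A(T-t))\,dt$, the triangle inequality for the integral gives $\|H(A,T)\|_F \leq \int_0^T \|\exp(A^{\top}(T-t))\exp(A(T-t))\|_F\,dt$. Applying submultiplicativity once more to the integrand and using that transposition leaves the Frobenius norm unchanged, i.e. $\|\exp(A^{\top}(T-t))\|_F = \|\exp(A(T-t))\|_F$, the integrand is bounded by $\|\exp(A(T-t))\|_F^2$. Chaining these steps produces exactly the constant $L(A,T)$ claimed in \eqref{LAT}.

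There is essentially no hard obstacle here, because the linearity of the gradient removes any need for a mean-value argument; the only points requiring a little care are that the Frobenius norm is genuinely submultiplicative (so that both uses above are legitimate) and that pulling the norm through the matrix integral is justified, which follows from continuity of the integrand on the compact interval $[0,T]$. I note that a slightly sharper constant would follow from replacing $\|H(A,T)\|_F$ by the spectral norm and writing $\max_{\|v\|=1}\int_0^T \|\exp(A(T-t))v\|^2\,dt$ for the largest eigenvalue of $-H(A,T)$; but since the statement is phrased in terms of Frobenius norms, the submultiplicative route above is the most direct way to match $L(A,T)$ as written.
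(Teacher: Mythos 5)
Your proof is correct and is essentially the argument the paper intends: the paper gives no separate proof, saying only that the corollary ``follows from \eqref{nabla_h},'' and your chain --- linearity of $\nabla h(B) = 2H(A,T)B$ in $B$, submultiplicativity of the Frobenius norm, the triangle inequality for the matrix-valued integral defining $H(A,T)$, and invariance of $\|\cdot\|_F$ under transposition --- fills in exactly the steps needed to arrive at the stated constant $L(A,T)$.
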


Algorithm 1 is the proposed algorithm for solving Problems 1 and 2.
In practice, we terminate the iteration if $\|\nabla h(B_k)\|_F$ is sufficiently small.
Note that we must choose a nonzero $B_0$ at step 1.
This is because it follows from \eqref{nabla_h} that $B_0=0$ implies $\nabla h(B_0)=0$.

\begin{algorithm} 
\caption{Projected gradient methods for Problem 1 and 2.}         
\begin{algorithmic}[1]
\STATE Set $B_0\in \bb{R}^{n\times m}\backslash\{0\}$ and $t>L(A,T)$, where $L(A,T)$ is defined as \eqref{LAT}.
\FOR{$k=0,1,2,\ldots$ }
\STATE $B_{k+1}\in {\rm P}\left(B_k -\frac{1}{t}\nabla h(B_k)\right)$, where ${\rm P}$ is the projection onto $Z_s \cap W$ for Problem 1 and $Z_s \cap W_{[0,1]}$ for Problem 2.
\ENDFOR
\end{algorithmic}
\end{algorithm}

Using Corollary \ref{cor1}, we can obtain the following theorem regarding global convergence and convergence rate.
To show this and for the following subsections,
we define 
$\Lambda$ as the index set of ${\bb R}^{n\times m}$.
That is, $\Lambda:= \{ (i,j)\,|\, i\in \{1,2,\ldots, n\},\,\,j\in \{1,2,\ldots, m\}\}$.

\begin{theorem}
Any sequence $\{B_k\}$ generated by Algorithm 1 for Problem 1 (Problem 2) globally converges to a stationary point of $\mathcal{I}_{Z_s \cap W}(B) + h(B)$ ($\mathcal{I}_{Z_s \cap W_{[0,1]}}(B) + h(B)$) with locally linear convergence rate.
\end{theorem}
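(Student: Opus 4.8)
The plan is to cast the problem as the minimization of $F(B) := \mathcal{I}_{Z_s \cap W}(B) + h(B)$ (the argument for Problem 2 is identical with $W$ replaced by $W_{[0,1]}$) and to invoke the abstract convergence theory for descent methods on Kurdyka--{\L}ojasiewicz (KL) functions. The iteration in Algorithm 1 is precisely the proximal gradient step $B_{k+1} \in \argmin_B \{ \mathcal{I}_{Z_s \cap W}(B) + \langle \nabla h(B_k), B - B_k \rangle + \frac{t}{2}\|B - B_k\|_F^2 \}$, because the proximal map of an indicator function equals the corresponding projection $\mathrm{P}$. Throughout I would use that $h$ is a quadratic with $L(A,T)$-Lipschitz gradient (Corollary \ref{cor1}) and that $Z_s \cap W$ is compact.

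First I would establish a sufficient decrease property. The descent lemma applied to $h$, combined with the fact that $B_{k+1}$ minimizes the proximal model above, yields $F(B_{k+1}) \leq F(B_k) - \frac{t - L(A,T)}{2}\|B_{k+1} - B_k\|_F^2$. Since $t > L(A,T)$, this is a strict decrease. Because every iterate lies in the compact set $Z_s \cap W$ on which the continuous function $h$ is bounded below, $\{F(B_k)\}$ is nonincreasing and bounded below, hence convergent; summing the decrease inequality gives $\sum_k \|B_{k+1} - B_k\|_F^2 < \infty$, so $\|B_{k+1} - B_k\|_F \to 0$ and $\{B_k\}$ is bounded.

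Next I would verify the relative error condition. The optimality condition of the proximal subproblem gives $t(B_k - B_{k+1}) - \nabla h(B_k) \in \partial \mathcal{I}_{Z_s \cap W}(B_{k+1})$ in the sense of the limiting subdifferential (needed since $Z_s$ is nonconvex), whence $\nabla h(B_{k+1}) - \nabla h(B_k) + t(B_k - B_{k+1}) \in \partial F(B_{k+1})$, and by Lipschitz continuity of $\nabla h$ some element of $\partial F(B_{k+1})$ has norm at most $(t + L(A,T))\|B_{k+1} - B_k\|_F$. Passing to the limit along any convergent subsequence and using closedness of the limiting subdifferential shows every accumulation point is stationary. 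Moreover $F$ is semialgebraic, since $h$ is a quadratic polynomial, the box $W$ is polyhedral, and $Z_s$ is a finite union of coordinate subspaces; hence $F$ has the KL property. With sufficient decrease, relative error, and the KL property established, the standard abstract convergence theorem for KL descent methods upgrades subsequential convergence to convergence of the \emph{entire} sequence $\{B_k\}$ to a single stationary point $B^*$, proving global convergence.

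The main obstacle is the \emph{locally linear} rate, which requires showing that the KL exponent of $F$ at $B^*$ equals $1/2$; this is exactly where the calculus of \cite{li2018calculus} is essential. The key observation is that the $\ell^0$ constraint localizes: in a neighborhood of $B^*$ the support of $B^*$ is preserved, so $Z_s \cap W$ coincides locally with a polyhedral set, namely a coordinate subspace intersected with the box. Restricted to this neighborhood, $F$ is therefore the sum of the quadratic $h$ and the indicator of a polyhedron, and the exponent-calculus results of \cite{li2018calculus} identify the KL exponent of such a composite as $1/2$; the usual sharpening of the KL analysis then gives an R-linear rate. The delicate points I expect to handle carefully are the precise localization of the nonconvex set $Z_s$ near $B^*$ (separating the cases $\|B^*\|_0 < s$ and $\|B^*\|_0 = s$) and checking that the hypotheses of the exponent-calculus lemmas apply to $h$ restricted to the relevant subspace.
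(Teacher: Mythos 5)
Your proposal is correct and follows essentially the same two-step route as the paper: global convergence via the abstract KL descent framework of Attouch--Bolte--Svaiter (the paper simply cites their Theorem 5.1 and Remark 5.2, whose hypotheses---sufficient decrease, relative error, proper lower semicontinuous KL objective, Lipschitz gradient---are exactly what you re-derive by hand), and the linear rate via the KL-exponent calculus of Li and Pong. The one substantive difference is how the cardinality constraint is reduced to polyhedral pieces. You localize around the limit $B^*$ and claim that $Z_s\cap W$ is locally a single polyhedron; this is true only when $\|B^*\|_0=s$, since for $\|B^*\|_0<s$ every neighborhood of $B^*$ still meets all coordinate subspaces of dimension at most $s$ containing the support of $B^*$, i.e., a finite \emph{union} of polyhedra. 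Resolving that degenerate case requires the exponent rule for a pointwise minimum of finitely many KL functions---which is precisely how the paper proceeds, globally and without any case analysis: it writes
$F(B)=\min_{\Omega\in\Gamma_{nm-s}}\left[\mathcal{I}_{H_\Omega\cap W}(B)+{\rm tr}(BB^{\top}H(A,T))\right]$,
each piece being a quadratic plus a polyhedral indicator, checks that $F$ is continuous on $\{B\,|\,\partial F(B)\neq\emptyset\}$, and invokes Corollary 5.2 of the Li--Pong paper. So the ``delicate point'' you defer is exactly where the paper's finite-min decomposition does the work; with that substitution your argument is complete, and your explicit verification of the descent and relative-error conditions is a harmless (if redundant) unpacking of the theorem the paper cites.
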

\begin{proof}
We only prove the claim on Problem 1, because the proof on Problem 2 is similar.

First, we show the global convergence.
The objective function of \eqref{problem1}, that is,
$\mathcal{I}_{Z_s \cap W}(B) + h(B)$, is a proper lower semicontinuous KL function \cite{bolte2014proximal}.
Moreover, $\mathcal{I}_{Z_s \cap W}(B) + h(B)$ is bounded below.
In fact,
$B\in Z_s \cap W$ implies $\mathcal{I}_{Z_s \cap W}(B) + h(B)>-\infty$, and $B\not\in Z_s \cap W$ yields $\mathcal{I}_{Z_s \cap W}(B) = \infty > -h(B)$.
Thus, if $B\not\in Z_s \cap W$, then $\mathcal{I}_{Z_s \cap W}(B) + h(B)>-h(B)+h(B)=0$.
From Corollary \ref{cor1}, $\nabla h$ is $L(A,T)$-Lipschitz continuous.
Thus, Theorem 5.1 and Remark 5.2 in \cite{attouch2013convergence} imply global convergence. 

Next, we show locally linear convergence rate.
Let $\Gamma_k$ be the set of all sets of $k$ different indices $(i,j)\in \Lambda$. 
Using $\Gamma_k$ and \eqref{lem_h}, Problem 1 can be rewritten as
\begin{align*}
&\min_B\quad \min_{\Omega\in \Gamma_{nm-s}}\,\, {\rm tr}(BB^{\top}H(A,T)) \nonumber\\
&{\rm subject\, to}\quad B\in H_{\Omega}\cap W.\nonumber
\end{align*}
with $H_{\Omega}:=\{ B\in {\bb R}^{n\times m}\,|\,B_{ij}=0\,\, {\rm for}\,\, (i,j)\in \Omega\}$.
Moreover, this problem is equivalent to
$\min_B\,\, F(B)$,
where $F(B):= \min_{\Omega\in \Gamma_{nm-s}}\mathcal{I}_{H_{\Omega}\cap W}(B)+ {\rm tr}(BB^{\top}H(A,T))$.
Because $F(B)$ is continuous on $\{B\in {\bb R}^{n\times m}\,|\, \partial F(B)\neq \emptyset\}$, Corollary 5.2 in \cite{li2018calculus} implies locally linear convergence rate. \qed
\end{proof}

In the following subsections, we show that the projections onto $Z_s \cap W$ and $Z_s \cap W_{[0,1]}$ in Algorithm 1 can be easily calculated.
To this end, we define
\begin{align*}
\|X\|_{\Gamma} := \sqrt{\sum_{(i,j)\in \Gamma} X_{ij}^2},\quad
\langle X,Y \rangle_{\Gamma} := \sum_{(i,j)\in \Gamma} X_{ij}Y_{ij}
\end{align*}
for any $X, Y\in {\bb R}^{n\times m}$ and any $\Gamma \subset \Lambda$.

\subsection{Projection onto $Z_s \cap W$} \label{Sec3A}

The projections of $B\in {\bb R}^{n\times m}$ onto $Z_s$, $W$, and $Z_s \cap W$ are defined by
\begin{align}
{\rm P}_{Z_s}(B)&:= \argmin_{X\in {\bb R}^{n\times m}} \left\{ \|X-B\|_F^2\,|\, \|X\|_0\leq s\right\}, \nonumber\\
{\rm P}_{W}(B)&:= \argmin_{X\in {\bb R}^{n\times m}} \left\{ \|X-B\|_F^2\,|\,-{\bf E}\leq X\leq {\bf E} \right\}, \nonumber \\
&= \min(\max(-{\bf E}, B), {\bf E})\nonumber\\
{\rm P}_{Z_s\cap W}(B) &:=\argmin_{X\in {\bb R}^{n\times m}} \left\{ \|X-B\|_F^2\,|\, -{\bf E}\leq X\leq {\bf E},\right.\nonumber\\
&\quad\, \left.\,\,\|X\|_0\leq s\right\}, \label{projection1}
\end{align}
respectively.
Note that ${\rm P}_{Z_s}$ can be a set, in general. In fact, $({\rm P}_{Z_s}(B))_{ij} = B_{ij}$ if $B_{ij}$ is contained in the $s$ first largest entries in absolute value of $B$, and $({\rm P}_{Z_s}(B))_{ij}=0$ otherwise.
That is, the set-valuedness of ${\rm P}_{Z_s}$ arises from the fact that the $s$ largest entries may not be uniquely defined.
Also, for any $B\in {\bb R}^{n\times m}$, let $\Lambda_s(B)$ be any index set that indicates the $s$ first largest entries in $|B_{ij}|$, $(i,j)\in \Lambda$.
By definition, for any $B, X\in {\bb R}^{n\times m}$,
\begin{align}
\|X\|_F^2 = \|X\|^2_{\Lambda_s(B)} + \|X\|_{\Lambda\backslash \Lambda_s(B)}^2. \label{hoge}
\end{align}
Furthermore, we have the following lemma.

\begin{lemma} \label{lem1}
For any $B, X\in {\bb R}^{n\times m}$,
\begin{align}
\|X-B\|_{\Lambda_s(B)}^2+\|X\|_{\Lambda\backslash \Lambda_s(B)}^2 = \|X-{\rm P}_{Z_s}(B)\|_F^2. \label{hoge2}
\end{align}
\end{lemma}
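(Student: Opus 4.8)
This identity is a direct consequence of the explicit description of ${\rm P}_{Z_s}(B)$ given just before the lemma: the projection keeps the entries indexed by $\Lambda_s(B)$ unchanged and zeros out all the others. The plan is therefore to start from the right-hand side, expand the Frobenius norm as a sum over all of $\Lambda$, and split that sum according to the partition $\Lambda = \Lambda_s(B) \sqcup (\Lambda \backslash \Lambda_s(B))$. On each block I substitute the known values of $({\rm P}_{Z_s}(B))_{ij}$ and read off the two terms on the left-hand side.

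\textbf{Key steps.} First I would write
\begin{align*}
\|X-{\rm P}_{Z_s}(B)\|_F^2
= \sum_{(i,j)\in \Lambda_s(B)} \left(X_{ij}-({\rm P}_{Z_s}(B))_{ij}\right)^2
+ \sum_{(i,j)\in \Lambda\backslash \Lambda_s(B)} \left(X_{ij}-({\rm P}_{Z_s}(B))_{ij}\right)^2,
\end{align*}
which is just the defining formula for $\|\cdot\|_F$ together with the block decomposition of $\Lambda$. Next, for $(i,j)\in \Lambda_s(B)$ I use $({\rm P}_{Z_s}(B))_{ij}=B_{ij}$, so the first sum becomes $\sum_{(i,j)\in \Lambda_s(B)}(X_{ij}-B_{ij})^2 = \|X-B\|_{\Lambda_s(B)}^2$ by the definition of $\|\cdot\|_{\Gamma}$. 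For $(i,j)\in \Lambda\backslash \Lambda_s(B)$ I use $({\rm P}_{Z_s}(B))_{ij}=0$, so the second sum becomes $\sum_{(i,j)\in \Lambda\backslash \Lambda_s(B)} X_{ij}^2 = \|X\|_{\Lambda\backslash \Lambda_s(B)}^2$. Combining the two gives exactly the left-hand side of \eqref{hoge2}.

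\textbf{Main obstacle.} There is essentially no analytic difficulty here; the only point requiring a word of care is the set-valuedness of ${\rm P}_{Z_s}$ noted in the text. Since ${\rm P}_{Z_s}(B)$ is determined by the chosen index set $\Lambda_s(B)$ (the support of the retained entries coincides with $\Lambda_s(B)$), the identity should be understood for a fixed but arbitrary selection of $\Lambda_s(B)$, and the computation above holds verbatim for that selection. Thus the proof reduces to the block-splitting calculation and the substitution of the two explicit expressions for the entries of ${\rm P}_{Z_s}(B)$.
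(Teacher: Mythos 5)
Your proof is correct and follows essentially the same route as the paper's: split $\|X-{\rm P}_{Z_s}(B)\|_F^2$ over the partition $\Lambda = \Lambda_s(B)\cup(\Lambda\backslash\Lambda_s(B))$ (the paper's equation \eqref{hoge}), then substitute $({\rm P}_{Z_s}(B))_{ij}=B_{ij}$ on $\Lambda_s(B)$ and $({\rm P}_{Z_s}(B))_{ij}=0$ off it. Your remark about fixing a particular selection $\Lambda_s(B)$ to handle the set-valuedness of ${\rm P}_{Z_s}$ is a point of care the paper leaves implicit, but it does not change the argument.
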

\begin{proof}
It follows from \eqref{hoge} that
\begin{align*}
\|X-{\rm P}_{Z_s}(B)\|_F^2 =& \|X-{\rm P}_{Z_s}(B)\|^2_{\Lambda_s(B)}\\
& + \|X-{\rm P}_{Z_s}(B)\|_{\Lambda\backslash\Lambda_s(B)}^2.
\end{align*}
By definition,
\begin{align*}
||X-{\rm P}_{Z_s}(B)||^2_{\Lambda_s(B)} &= ||X-B||^2_{\Lambda_s(B)} \\
\|X-{\rm P}_{Z_s}(B)\|_{\Lambda\backslash\Lambda_s(B)}^2 &= \|X\|_{\Lambda\backslash\Lambda_s(B)}^2.
\end{align*}
Thus, \eqref{hoge2} holds. \qed
\end{proof}

Using \eqref{hoge} and Lemma \ref{lem1},
we can show that
the projection onto $Z_s \cap W$ is the composition of ${\rm P}_{Z_s}$ and ${\rm P}_{W}$.

\begin{theorem} \label{thm_pro1}
\begin{align*}
{\rm P}_{Z_s\cap W} = {\rm P}_{W}\circ {\rm P}_{Z_s}.
\end{align*}
\end{theorem}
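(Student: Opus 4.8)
The plan is to reduce the claim to an entrywise optimization and to show that the support which is optimal for the pure sparsity projection remains optimal after the box constraint is imposed. Write $Y := {\rm P}_W({\rm P}_{Z_s}(B))$ and fix an index set $\Lambda_s(B)$ realizing the $s$ largest entries of $|B_{ij}|$. Then $Y$ is supported on $\Lambda_s(B)$, so $\|Y\|_0 \le s$, and every entry of $Y$ lies in $[-1,1]$; hence $Y \in Z_s \cap W$, so $Y$ is feasible for \eqref{projection1}. What remains is to verify the minimality $\|Y-B\|_F^2 \le \|X-B\|_F^2$ for every $X \in Z_s \cap W$, after which $Y \in {\rm P}_{Z_s\cap W}(B)$ follows.

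First I would fix an arbitrary feasible $X \in Z_s \cap W$ and set $S := \{(i,j) : X_{ij} \neq 0\}$, so $|S| \le s$. Since $X$ vanishes off $S$ and each $X_{ij}$ lies in $[-1,1]$,
\[
\|X-B\|_F^2 = \sum_{(i,j)\in S}(X_{ij}-B_{ij})^2 + \sum_{(i,j)\notin S} B_{ij}^2 \ge \sum_{(i,j)\in S}(\pi(B_{ij})-B_{ij})^2 + \sum_{(i,j)\notin S} B_{ij}^2,
\]
where $\pi(b):=\min(\max(-1,b),1)$ is the scalar projection appearing in the formula for ${\rm P}_W$, and the inequality uses that $\pi(B_{ij})$ minimizes $(x-B_{ij})^2$ over $x\in[-1,1]$. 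Introducing the per-entry gain $g(b):= b^2 - (\pi(b)-b)^2$, the lower bound equals $\sum_{(i,j)\in\Lambda} B_{ij}^2 - \sum_{(i,j)\in S} g(B_{ij})$, and the analogous identity for $Y$ (support $\Lambda_s(B)$, nonzero entries $\pi(B_{ij})$) gives $\|Y-B\|_F^2 = \sum_{(i,j)\in\Lambda} B_{ij}^2 - \sum_{(i,j)\in\Lambda_s(B)} g(B_{ij})$. Thus the inequality to be proved collapses to $\sum_{(i,j)\in S} g(B_{ij}) \le \sum_{(i,j)\in\Lambda_s(B)} g(B_{ij})$.

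The main obstacle, and the only place the box constraint interacts nontrivially with sparsity, is this last inequality, i.e. that selecting the $s$ largest entries in $|B_{ij}|$ also maximizes the total gain. A direct computation gives $g(b)=b^2$ when $|b|\le 1$ and $g(b)=2|b|-1$ when $|b|>1$, so $g$ is a nondecreasing function of $|b|$. Because $\Lambda_s(B)$ collects the $s$ indices with largest $|B_{ij}|$ while $|S|\le s$, monotonicity of $g$ forces $\sum_{(i,j)\in S} g(B_{ij}) \le \sum_{(i,j)\in\Lambda_s(B)} g(B_{ij})$, which completes the minimality argument. When $B$ has ties among its $s$-th largest magnitudes, the set $\Lambda_s(B)$, and hence ${\rm P}_{Z_s}$, is not unique, so I would phrase the conclusion as the set identity ${\rm P}_{Z_s\cap W}={\rm P}_W\circ{\rm P}_{Z_s}$ and check the reverse inclusion by the same gain bookkeeping: any minimizer of \eqref{projection1} must attain equality in the gain comparison, forcing its support to be a top-$s$ selection and its nonzero entries to equal $\pi(B_{ij})$, so it arises as ${\rm P}_W$ of some element of ${\rm P}_{Z_s}(B)$. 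Lemma \ref{lem1} and \eqref{hoge} are precisely what justify the orthogonal split in the box-free case, and the gain function $g$ is what upgrades that split to the present constrained setting.
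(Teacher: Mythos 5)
Your proof is correct, but it follows a genuinely different route from the paper's. The paper proves Theorem \ref{thm_pro1} by a chain of equivalent reformulations of the argmin in \eqref{projection1}: it expands the squared distance along the split \eqref{hoge}, asserts that optimality forces $\|X\|_{\Lambda\backslash \Lambda_s(B)}=0$ (justified only by the one-line remark that the magnitudes on $\Lambda_s(B)$ dominate those off it), drops the $l^0$ constraint, and then invokes Lemma \ref{lem1} to recognize the reduced problem as the projection of ${\rm P}_{Z_s}(B)$ onto $W$. You instead verify optimality of the candidate $Y={\rm P}_{W}({\rm P}_{Z_s}(B))$ directly, by comparing objective values through the per-entry gain $g(b)=b^2-(\pi(b)-b)^2$ (with $\pi$ the scalar clamp to $[-1,1]$), which equals $b^2$ for $|b|\le 1$ and $2|b|-1$ for $|b|>1$. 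This buys two things. First, it makes rigorous precisely the step the paper asserts: once the box constraint is present, the benefit of spending a support slot on index $(i,j)$ is $g(B_{ij})$, \emph{not} $B_{ij}^2$, and restricting the support to a top-$s$ set in $|B_{ij}|$ is legitimate exactly because $g$ is a nonnegative, monotone function of $|b|$ — this monotonicity plus a padding/exchange argument is the real content behind the first equality in \eqref{hoge4}, which the paper glosses over. Second, you explicitly address the set-valuedness of ${\rm P}_{Z_s}$ under ties and sketch the reverse inclusion; note that there you genuinely need \emph{strict} monotonicity of $g$ in $|b|$ (which holds) to force a minimizer's support to be a top-$s$ selection, and the degenerate case $\|B\|_0<s$ needs a one-line separate check, but both go through. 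What the paper's route buys in exchange is brevity and reuse: Lemma \ref{lem1} and the norm-splitting identity \eqref{hoge} are also the template for the proof of Theorem \ref{thm_pro2}, whereas your gain-function argument would have to be redone with the asymmetric clamp to $[0,1]$ for that case.
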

\begin{proof}
It follows from \eqref{projection1} and \eqref{hoge} that
\begin{align}
{\rm P}_{Z_s\cap W}(B) &= \argmin_{X\in {\bb R}^{n\times m}} \{\|X\|^2_{\Lambda_s(B)} + \|X\|_{\Lambda\backslash \Lambda_s(B)}^2 \nonumber\\
&\quad -2 \langle B,X \rangle_{\Lambda_s(B)} -2\langle B,X\rangle_{\Lambda\backslash \Lambda_s(B)} + \|B\|_F^2\,| \nonumber\\
&\quad -{\bf E}\leq X\leq {\bf E},\,\,\|X\|_0\leq s \} \label{hoge3}
\end{align}
Because each $|B_{ij}|$, $(i,j)\in \Lambda_s(B)$ is greater than all $|B_{ij}|$, $(i,j)\in \Lambda\backslash \Lambda_s(B)$ and $\|B\|_F^2$ is constant, \eqref{hoge3} is equivalent to
\begin{align}
{\rm P}_{Z_s\cap W}(B) &= \argmin_{X\in {\bb R}^{n\times m}} \{\|X\|^2_{\Lambda_s(B)}-2 \langle B,X \rangle_{\Lambda_s(B)}\,| \nonumber \\
&\quad \|X\|_{\Lambda\backslash \Lambda_s(B)}^2=0,-{\bf E}\leq X\leq {\bf E}, \|X\|_0\leq s \} \nonumber \\
&=\argmin_{X\in {\bb R}^{n\times m}} \{\|X\|^2_{\Lambda_s(B)}-2 \langle B,X \rangle_{\Lambda_s(B)}\,| \nonumber \\
&\quad \|X\|_{\Lambda\backslash \Lambda_s(B)}^2=0,-{\bf E}\leq X\leq {\bf E}\} \nonumber \\
&=\argmin_{X\in {\bb R}^{n\times m}} \{\|X-B\|^2_{\Lambda_s(B)} +\|X\|_{\Lambda\backslash \Lambda_s(B)}^2\,| \nonumber \\
&\quad-{\bf E}\leq X\leq {\bf E}\} \label{hoge4}
\end{align}
From \eqref{hoge2} in Lemma \ref{lem1}, \eqref{hoge4} is equivalent to
\begin{align*}
{\rm P}_{Z_s\cap W}(B) &=
\argmin_{X\in {\bb R}^{n\times m}} \left\{ \|X-{\rm P}_{Z_s}(B)\|_F^2\,|\,-{\bf E}\leq X\leq {\bf E} \right\} \\
&={\rm P}_{W}({\rm P}_{Z_s}(B)).
\end{align*}
This completes the proof. \qed
\end{proof}

In general, 
${\rm P}_{Z_s\cap W} \neq {\rm P}_{Z_s}\circ {\rm P}_{W}$.
That is, the order of the projections ${\rm P}_{Z_s}$ and ${\rm P}_{W}$ is not commutative.
In fact, for example, suppose that $B=\begin{pmatrix}
3 \\
-4
\end{pmatrix}$ and $s=1$.
Then,
$P_W( P_{Z_1}(B)) = \begin{pmatrix}
0 \\
-1
\end{pmatrix}$ and
$P_{Z_1}(P_W(B)) = \begin{pmatrix}
1 \\
0
\end{pmatrix}$.
Thus,
$\| P_W(P_{Z_1}(B)) - B\|_F^2 = 18 < 20 = \|P_{Z_1}(P_W(B))-B\|_F^2$.

\subsection{Projection onto $Z_s \cap W_{[0,1]}$} \label{Sec3B}

The projections of $B\in {\bb R}^{n\times m}$ onto $\{B\in {\bb R}^{n\times m}\,|\, B\geq 0\}$, $\{B\in {\bb R}^{n\times m}\,|\, B\leq {\bf E}\}$, and $Z_s \cap W_{[0,1]}$ are defined by
\begin{align}
{\rm P}_{\geq 0}(B) &:= \argmin_{X\in {\bb R}^{n\times m}} \left\{ \|X-B\|_F^2\,|\, X\geq 0 \right\}, \nonumber\\
&= \max(0,B), \nonumber \\
{\rm P}_{\leq 1}(B) &:= \argmin_{X\in {\bb R}^{n\times m}} \left\{ \|X-B\|_F^2\,|\, X\leq {\bf E} \right\}, \nonumber\\
&= \min({\bf E},B), \nonumber\\
{\rm P}_{Z_s\cap W_{[0,1]}}(B) &:=\argmin_{X\in {\bb R}^{n\times m}} \left\{ \|X-B\|_F^2\,|\, 0\leq X\leq {\bf E},\right.\nonumber\\
&\quad\, \left.\,\,\|X\|_0\leq s\right\}, \label{projection2}
\end{align}
respectively.
For any $B\in {\bb R}^{n\times m}$, the index sets $\Gamma_{<0}(B)$, $\Gamma_{[0,1]}(B)$, and $\Gamma_{>1}(B)$ are defined by
\begin{align*}
\Gamma_{<0}(B) &:= \{(i,j)\in \Lambda\,|\, B<0\},\\
\Gamma_{[0,1]}(B) &:= \{(i,j)\in \Lambda\,|\, 0\leq B\leq {\bf E}\},\\
\Gamma_{>1}(B) &:= \{(i,j)\in \Lambda\,|\, B>{\bf E} \},
\end{align*}
respectively.
By definition, for any $B, X\in {\bb R}^{n\times m}$,
\begin{align}
\|X\|_F^2 = \|X\|^2_{\Gamma_{<0}(B)} + \|X\|_{\Gamma_{[0,1]}(B)}+\|X\|_{\Gamma_{>1}(B)}^2. \label{hoge5}
\end{align}
Also, for any $B\in {\bb R}^{n\times m}$, let $\tilde{\Lambda}_s(B)$ be the index set that indicates the $\min(s,|\tilde{\Lambda}(B)|)$ first largest entries in $|B_{ij}|$, $(i,j)\in \tilde{\Lambda}(B)$, where 
\begin{align*}
\tilde{\Lambda}(B):=\Gamma_{[0,1]}(B)\cup\Gamma_{>1}(B).
\end{align*}

The projection onto $Z_s \cap W_{[0,1]}$ is the composition of ${\rm P}_{\geq 0}$, ${\rm P}_{Z_s}$, and ${\rm P}_{\leq 1}$ as follows.
\begin{theorem} \label{thm_pro2}
\begin{align*}
{\rm P}_{Z_s\cap W_{[0,1]}} = {\rm P}_{\leq 1}\circ {\rm P}_{Z_s}\circ {\rm P}_{\geq 0}.
\end{align*}
\end{theorem}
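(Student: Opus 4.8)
The plan is to adapt the argument behind Theorem \ref{thm_pro1} to the one-sided box $W_{[0,1]}$, the new ingredient being that the lower bound $0$ destroys the sign symmetry that the two-sided case enjoys. Starting from the definition \eqref{projection2}, I would first note that $\|X-B\|_F^2=\sum_{(i,j)}(X_{ij}-B_{ij})^2$ is separable across $\Lambda$, so that once a support $\Omega\subset\Lambda$ with $|\Omega|\le s$ is fixed, the minimization over $\{X : X_{ij}=0\text{ for }(i,j)\notin\Omega,\ 0\le X\le{\bf E}\}$ splits into independent scalar box problems. On $\Omega$ the optimal entry is the clip $\min(1,\max(0,B_{ij}))$, while off $\Omega$ it is forced to $0$; hence ${\rm P}_{Z_s\cap W_{[0,1]}}(B)$ is obtained by choosing the support $\Omega$ that minimizes the resulting cost and then clipping on it. The partition $\Lambda=\Gamma_{<0}(B)\cup\Gamma_{[0,1]}(B)\cup\Gamma_{>1}(B)$ used in \eqref{hoge5} organizes this entrywise.

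Next I would dispose of the negative entries. For $(i,j)\in\Gamma_{<0}(B)$ the nearest point of $[0,1]$ to $B_{ij}$ is $0$, so such an index contributes the fixed cost $B_{ij}^2$ whether or not it lies in $\Omega$; placing it in the support merely wastes a unit of the budget $s$ without lowering the objective. Consequently an optimal support may be taken inside $\tilde\Lambda(B)=\Gamma_{[0,1]}(B)\cup\Gamma_{>1}(B)$, and on $\Gamma_{<0}(B)$ the projection equals $0=({\rm P}_{\geq0}(B))_{ij}$. This is exactly the role of the factor ${\rm P}_{\geq0}$, and it reduces the problem to selecting at most $\min(s,|\tilde\Lambda(B)|)$ indices among the entries of $\hat B:={\rm P}_{\geq0}(B)=\max(0,B)$, whose absolute values coincide with their values.

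The crux, and the step I expect to be the main obstacle, is to show that the optimal support consists of the $\min(s,|\tilde\Lambda(B)|)$ largest nonnegative entries, i.e. precisely the indices $\tilde\Lambda_s(B)$ retained by ${\rm P}_{Z_s}(\hat B)$. To this end I would compute, for a value $b=\hat B_{ij}\ge0$, the reduction in objective obtained by activating that index, namely $g(b):=b^2-\min_{0\le y\le1}(y-b)^2=b^2-(b-\min(1,b))^2$, which equals $b^2$ for $b\in[0,1]$ and $2b-1$ for $b>1$. Since $g$ is continuous and strictly increasing on $[0,\infty)$ and each activation has nonnegative gain, the total gain over a support of size $\le s$ is maximized by taking the indices with the largest values $\hat B_{ij}$. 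This monotonicity is what replaces the symmetric ``$s$ largest in absolute value'' selection of Theorem \ref{thm_pro1}, and it identifies the optimal support as $\tilde\Lambda_s(B)$, matching ${\rm P}_{Z_s}(\hat B)$.

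Finally I would assemble the three operations. After ${\rm P}_{Z_s}(\hat B)$ keeps the entries indexed by $\tilde\Lambda_s(B)$ at their nonnegative values and zeros the rest, applying ${\rm P}_{\leq1}=\min({\bf E},\cdot)$ clips each retained entry to $\min(1,\hat B_{ij})=\min(1,\max(0,B_{ij}))$ and leaves the zeros untouched, reproducing exactly the entrywise optimizer found above; hence ${\rm P}_{Z_s\cap W_{[0,1]}}={\rm P}_{\leq1}\circ{\rm P}_{Z_s}\circ{\rm P}_{\geq0}$. As in the discussion following Theorem \ref{thm_pro1}, the order is essential: ${\rm P}_{\geq0}$ must precede ${\rm P}_{Z_s}$ so that negative entries are never selected, and ${\rm P}_{\leq1}$ must come last so that the selection is driven by the true magnitudes rather than the clipped ones.
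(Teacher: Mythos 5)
Your proof is correct, and its skeleton coincides with the paper's: eliminate the indices in $\Gamma_{<0}(B)$ (the role of ${\rm P}_{\geq 0}$), show the optimal support is $\tilde{\Lambda}_s(B)$ (the role of ${\rm P}_{Z_s}$), and finish by clipping at ${\bf E}$ (the role of ${\rm P}_{\leq 1}$). Where you genuinely depart from the paper is at the support-selection step, which you rightly identify as the crux. The paper handles it by expanding $\|X-B\|^2_{\tilde{\Lambda}(B)}$ over $\tilde{\Lambda}_s(B)$ and its complement and asserting that, since each $B_{ij}$ with $(i,j)\in\tilde{\Lambda}_s(B)$ dominates every $B_{ij}$ with $(i,j)\in\tilde{\Lambda}(B)\backslash\tilde{\Lambda}_s(B)$, the minimizer must vanish off $\tilde{\Lambda}_s(B)$ --- an appeal to ``a similar discussion to the proof of Theorem \ref{thm_pro1}'' rather than a detailed argument. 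You instead exploit separability of $\|X-B\|_F^2$ and compute the exact gain of activating an index with value $b=\max(0,B_{ij})$, namely $g(b)=b^2-(b-\min(1,b))^2$, equal to $b^2$ on $[0,1]$ and to $2b-1$ for $b>1$, and observe that $g$ is continuous, nonnegative, and strictly increasing, so the largest entries of ${\rm P}_{\geq 0}(B)$ form an optimal support. This makes explicit the one fact both proofs ultimately rest on --- that clipping at $1$ does not upset the ordering of gains --- which the paper leaves implicit; your version is accordingly more self-contained (and transfers unchanged to any box $[0,\beta]$), while the paper's norm-identity manipulation stays within the formalism already set up for Theorem \ref{thm_pro1} and delivers the composition formula directly in terms of the argmin expressions for the projections.
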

\begin{proof}
It follows from \eqref{projection2} and \eqref{hoge5} that
\begin{align*}
{\rm P}_{Z_s\cap W_{[0,1]}}(B) &= \argmin_{X\in {\bb R}^{n\times m}} \{\|X-B\|^2_{\Gamma_{<0}(B)} + \|X-B\|_{\Gamma_{[0,1]}(B)}^2 \\
&\quad +\|X-B\|_{\Gamma_{>1}(B)}^2\,|\, 0\leq X\leq {\bf E},\,\|X\|_0\leq s\}.
\end{align*}
If $(i,j)\in \Gamma_{<0}(B)$, then $B_{ij}<0$. Thus, $\|X-B\|_{\Gamma_{<0}(B)}^2=\|X\|_{\Gamma_{<0}(B)}^2 -2\langle B,X\rangle_{\Gamma_{<0}(B)} +\|B\|_{\Gamma_{<0}(B)}^2$ subject to  $0\leq X\leq {\bf E}$, $\|X\|_0\leq s$ is minimized when $\|X\|_{\Gamma_{<0}(B)}=0$.
Thus,
\begin{align}
{\rm P}_{Z_s\cap W_{[0,1]}}(B) &= \argmin_{X\in {\bb R}^{n\times m}} \{\|X-B\|^2_{\tilde{\Lambda}(B)}\,|\, \|X\|_{\Gamma_{<0}(B)}=0, \nonumber\\ 
&\quad 0\leq X\leq {\bf E},\,\|X\|_0\leq s \}. \label{hogehoge1} 
\end{align}
Because $B_{ij}\geq 0$ for any $(i,j)\in \tilde{\Lambda}(B)$, \eqref{hogehoge1} implies
\begin{align}
{\rm P}_{Z_s\cap W_{[0,1]}}(B) &= \argmin_{X\in {\bb R}^{n\times m}} \{\|X-B\|^2_{\tilde{\Lambda}(B)}\,|\, \|X\|_{\Gamma_{<0}(B)}=0, \nonumber \\ 
&\quad X\leq {\bf E},\,\|X\|_0\leq s \}. \label{hogehoge2}
\end{align}
Moreover, because
\begin{align*}
 \|X-B\|_{\tilde{\Lambda}(B)}^2 
=& \|X\|_{\tilde{\Lambda}_s(B)}^2+\|X\|_{\tilde{\Lambda}(B)\backslash \tilde{\Lambda}_s(B)}^2 -2\langle B,X\rangle_{\tilde{\Lambda}_s(B)} \\
&-2\langle B,X\rangle_{\tilde{\Lambda}(B)\backslash \tilde{\Lambda}_s(B)} + \|B\|_{\tilde{\Lambda}(B)}^2
\end{align*}
and each $B_{ij}$ for $(i,j)\in \tilde{\Lambda}_s(B)$ is greater than all $B_{ij}$, $(i,j)\in\tilde{\Lambda}(B)\backslash \tilde{\Lambda}_s(B)$, \eqref{hogehoge2} yields
\begin{align*}
{\rm P}_{Z_s\cap W_{[0,1]}}(B) 
&=\argmin_{X\in {\bb R}^{n\times m}} \{ \|X-B\|^2_{\tilde{\Lambda}_s(B)}+\|X\|_{\Gamma_{<0}(B)}^2\\
&\quad+ \|X\|_{\tilde{\Lambda}(B)\backslash \tilde{\Lambda}_s(B)}^2 \,|\, X\leq {\bf E} \} \\
&=\argmin_{X\in {\bb R}^{n\times m}} \{ \|X-{\rm P}_{Z_s}({\rm P}_{\geq 0}(B))\|^2_{\tilde{\Lambda}_s(B))} \\
&\quad +\|X-{\rm P}_{Z_s}({\rm P}_{\geq 0}(B))\|_{\Gamma_{<0}(B)}^2\\
&\quad+ \|X-{\rm P}_{Z_s}({\rm P}_{\geq 0}(B))\|_{\tilde{\Lambda}(B)\backslash \tilde{\Lambda}_s(B)}^2 \,|\ X\leq {\bf E} \}\\
&=\argmin_{X\in {\bb R}^{n\times m}} \{ \|X-{\rm P}_{Z_s}({\rm P}_{\geq 0}(B))\|^2_{F} \,|\, X\leq {\bf E} \}\\
&={\rm P}_{\leq 1}({\rm P}_{Z_s}({\rm P}_{\geq 0}(B))).
\end{align*}
Here, the first equality follows from a similar discussion to the proof of Theorem 2,
and
 the second equality follows from
\begin{align*}
\|X-{\rm P}_{Z_s}({\rm P}_{\geq 0}(B))\|_{\tilde{\Lambda}_s(B)} &= \|X-B\|_{\tilde{\Lambda}_s(B)}, \\
||X-{\rm P}_{Z_s}({\rm P}_{\geq 0}(B))||_{\Gamma_{<0}(B)} &= ||X||_{\Gamma_{<0}(B)}, \\
\|X-{\rm P}_{Z_s}({\rm P}_{\geq 0}(B))\|_{\tilde{\Lambda}(B)\backslash \tilde{\Lambda}_s(B)} &=\|X\|_{\tilde{\Lambda}(B)\backslash \tilde{\Lambda}_s(B)}.
\end{align*}
This completes the proof. \qed
\end{proof}

Note that, similarly to in Theorem \ref{thm_pro1}, we cannot change the order of
${\rm P}_{\leq 1}$, ${\rm P}_{Z_s}$, and ${\rm P}_{\geq 0}$. In fact, for example, suppose that $B=\begin{pmatrix}
3 \\
-4
\end{pmatrix}$ and $s=1$.
Then,
$P_{\leq 1}( P_{Z_1}(P_{\geq 0}(B))) = \begin{pmatrix}
1 \\
0
\end{pmatrix}$ and
$P_{\geq 0}( P_{Z_1}(P_{\leq 1}(B))) = \begin{pmatrix}
0 \\
0
\end{pmatrix}$.
Thus,
$\| P_{\leq 1}( P_{Z_1}(P_{\geq 0}(B))) - B\|_F^2 = 20 < 25 = \|P_{\geq 0}( P_{Z_1}(P_{\leq 1}(B)))-B\|_F^2$.

\section{Numerical Experiments} \label{Sec4}

This section describes the results of the numerical experiment performed using Algorithm 1.
In all the cases, we set $m=1$, although Algorithm 1 can also be used for $m>1$.
That is, we only considered a single input case
because the case is already sufficiently difficult.
In fact, it has been known that 
the problem of finding $B\in {\bb R}^{n\times 1}$ such that
system \eqref{system} is controllable is NP hard \cite{olshevsky2014minimal}.
Moreover, we set final time $T=10$ and $t=1.1 L(A,T)$ in Algorithm 1.

For Problem 1, we constructed $A$ in system \eqref{system} by using 
MATLAB command ${\rm sprandn}$ (that is a sparse normally distributed random matrix generator) and
the Watts--Strogats model with $n$ nodes, $6$ average degree, and $0.05$ rewiring probability \cite{watts1998collective}.
For Problem 2, we constructed a Metzler matrix $A$ in system \eqref{system} by using MATLAB command ${\rm sprand}$ (that is a sparse uniformly distributed random matrix generator on the interval $(0,1)$) and the Watts--Strogats model that has the same parameters as those used in Problem 1.



Figs.\,\ref{Fig1} and \ref{Fig2}
show the relations between controllability index $-h(B)$ and sparsity parameter $s$ in Problems 1 and 2, respectively.
Here, initial point $B_0$ in Algorithm 1 was the same for all $s$ when $n$ was fixed.
As shown in Fig.\,\ref{Fig1}, $-h(B)$ tended to increase as $s$ increased.
However, $-h(B)$ did not monotonically increase, although $-h(B)$ is higher as $s$ increases when we use global optimal solutions, as mentioned in Remark \ref{remark2}.
This means that local optimal solutions to Problem 1 could be obtained using Algorithm 1.
In contrast, $-h(B)$ monotonically increased as $s$ increased for Problem 2.

\begin{figure}[t]
 \begin{tabular}{cc}
\begin{minipage}{0.45\hsize}
\begin{center}
\includegraphics[width = 4.1cm, height = 2.8cm]{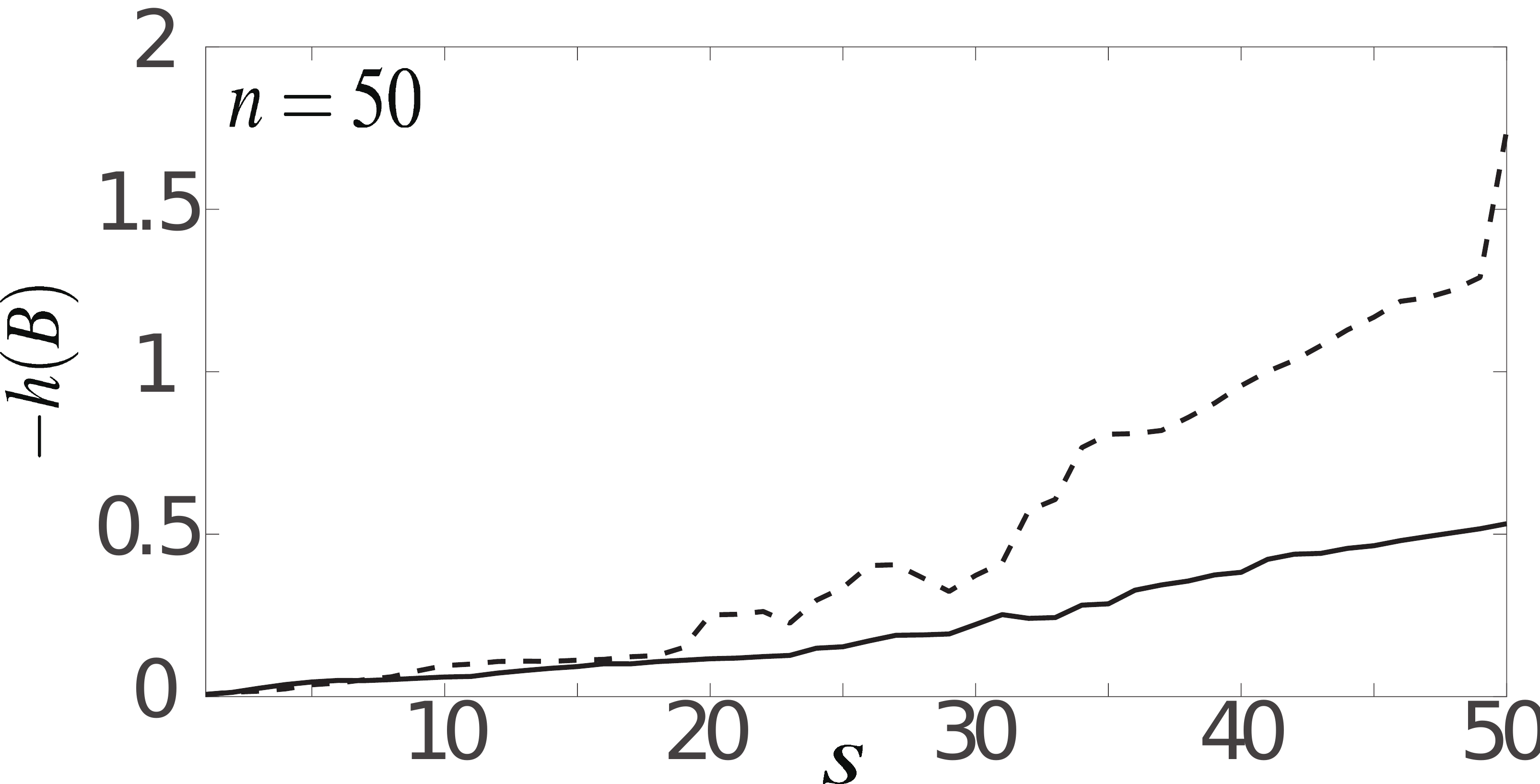}\\
\includegraphics[width = 4.5cm, height = 3cm]{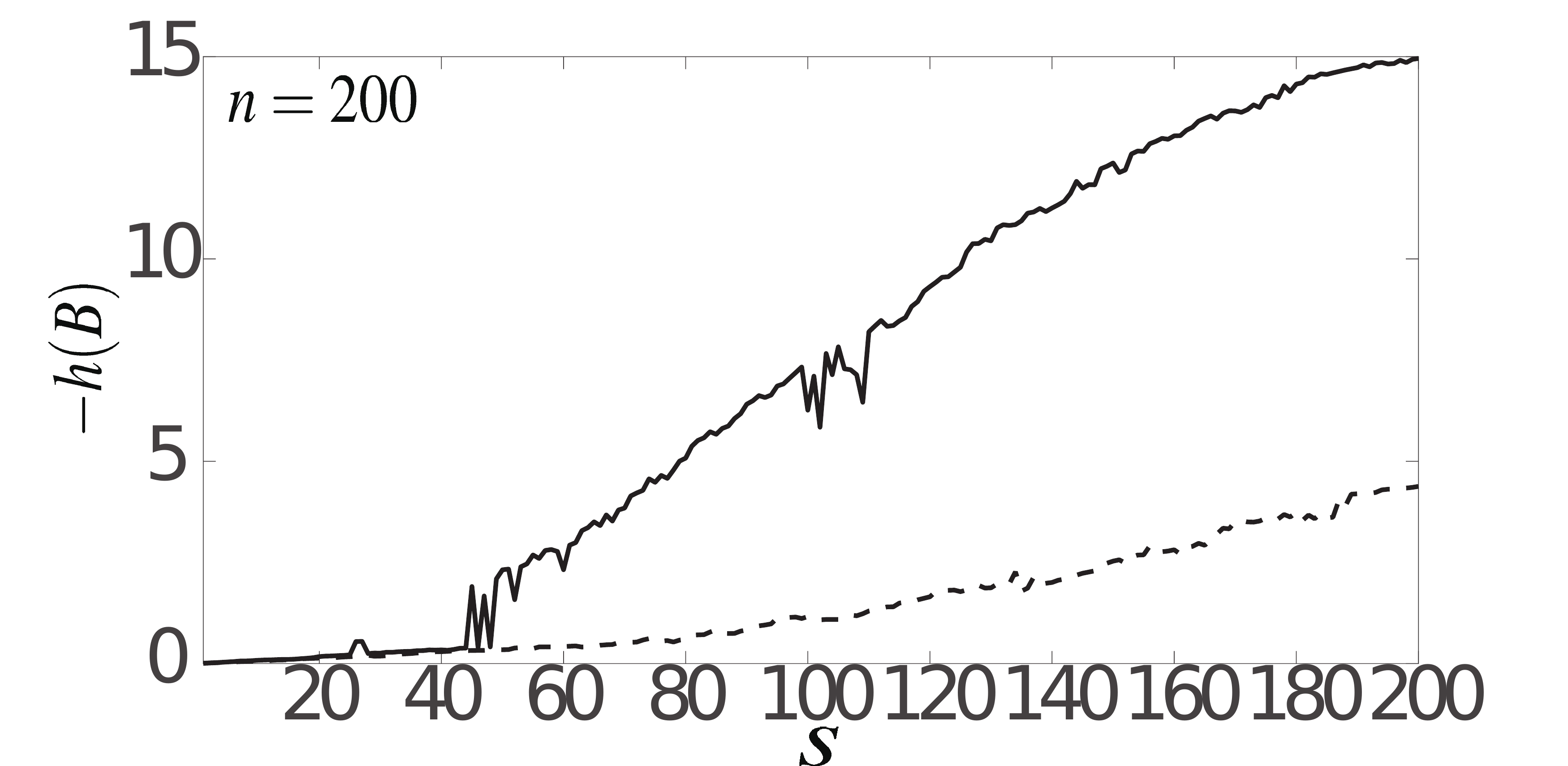}
\end{center}
\end{minipage} &
\begin{minipage}{0.45\hsize}
\begin{center}
\includegraphics[width = 4.5cm, height = 3cm]{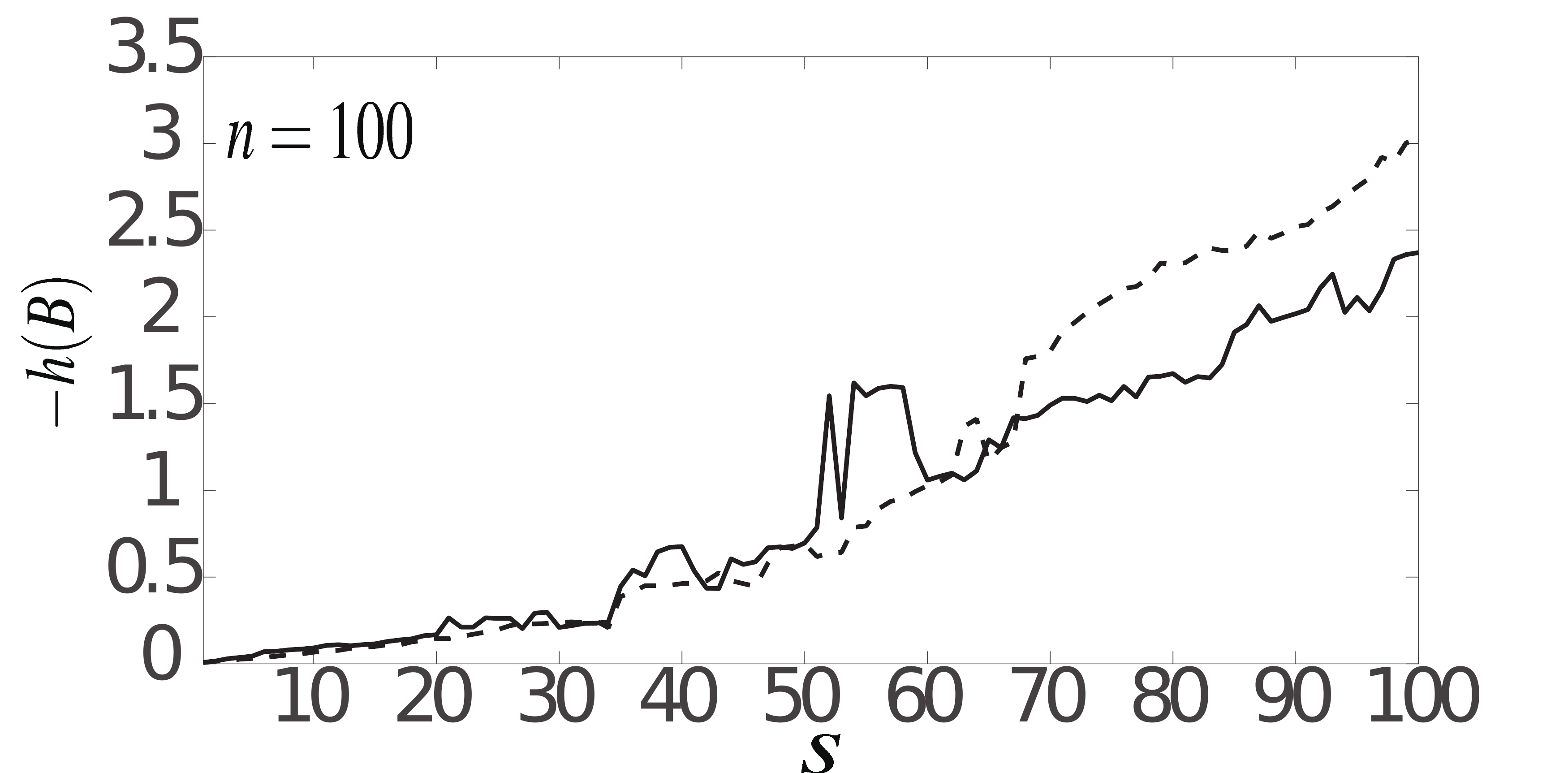}\\
\includegraphics[width = 4.5cm, height = 3cm]{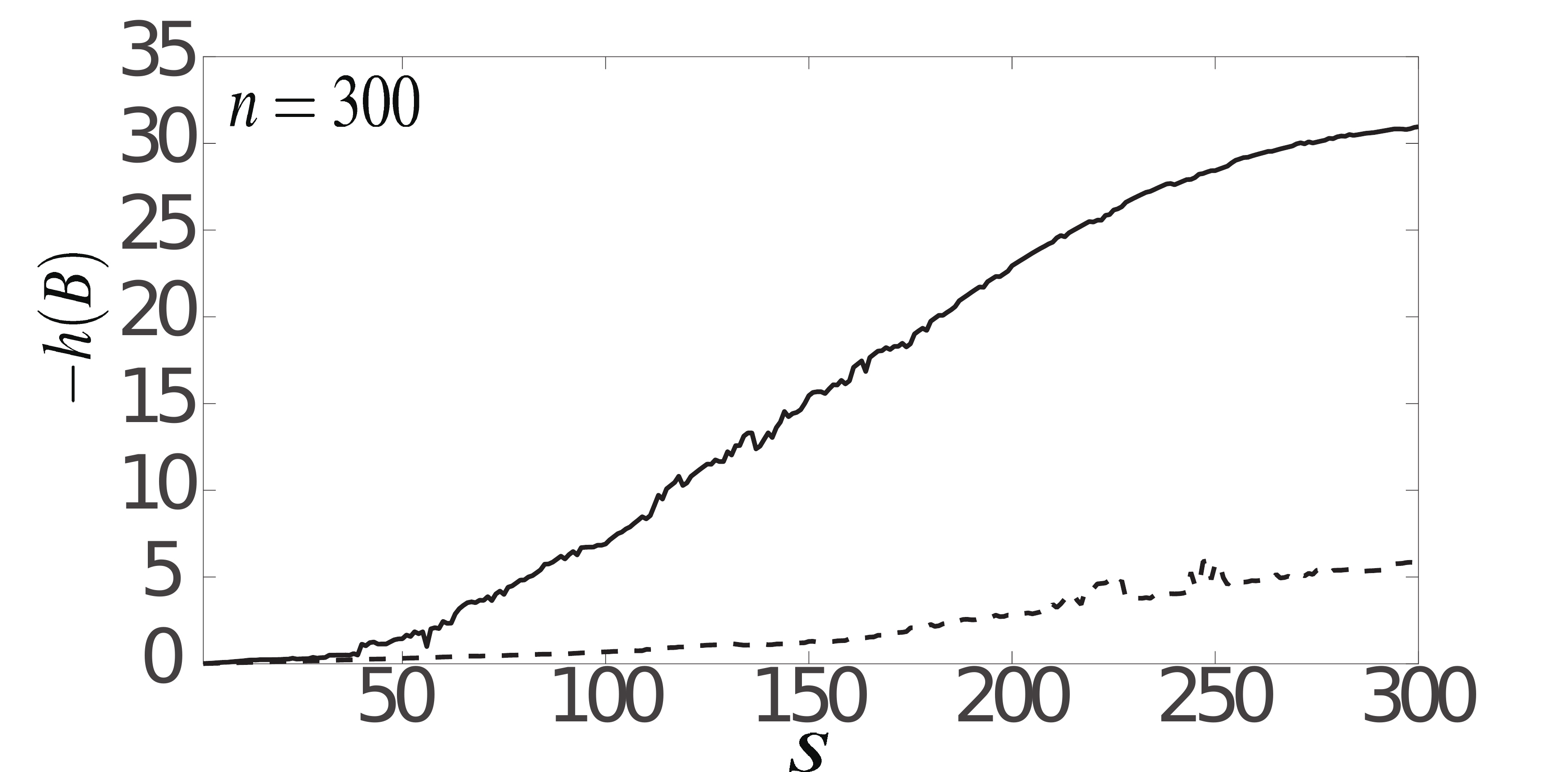}
\end{center}
\vspace{-1mm}
\end{minipage}
\end{tabular}
\caption{The relations between controllability index $-h(B)$ and sparsity parameter $s$ in Problem 1. The top-left and top-right figures show the simulation results in $n=50$ and $n=100$, respectively. The bottom-left and bottom-right figures show the simulation results in $n=200$ and $n=300$, respectively. The solid and dashed lines indicate the results of the {\rm sprandn} and Watts--Strogats cases, respectively.} \label{Fig1}
\end{figure}

\begin{figure}[t]
 \begin{tabular}{cc}
\begin{minipage}{0.45\hsize}
\begin{center}
\includegraphics[width = 4.5cm, height = 2.8cm]{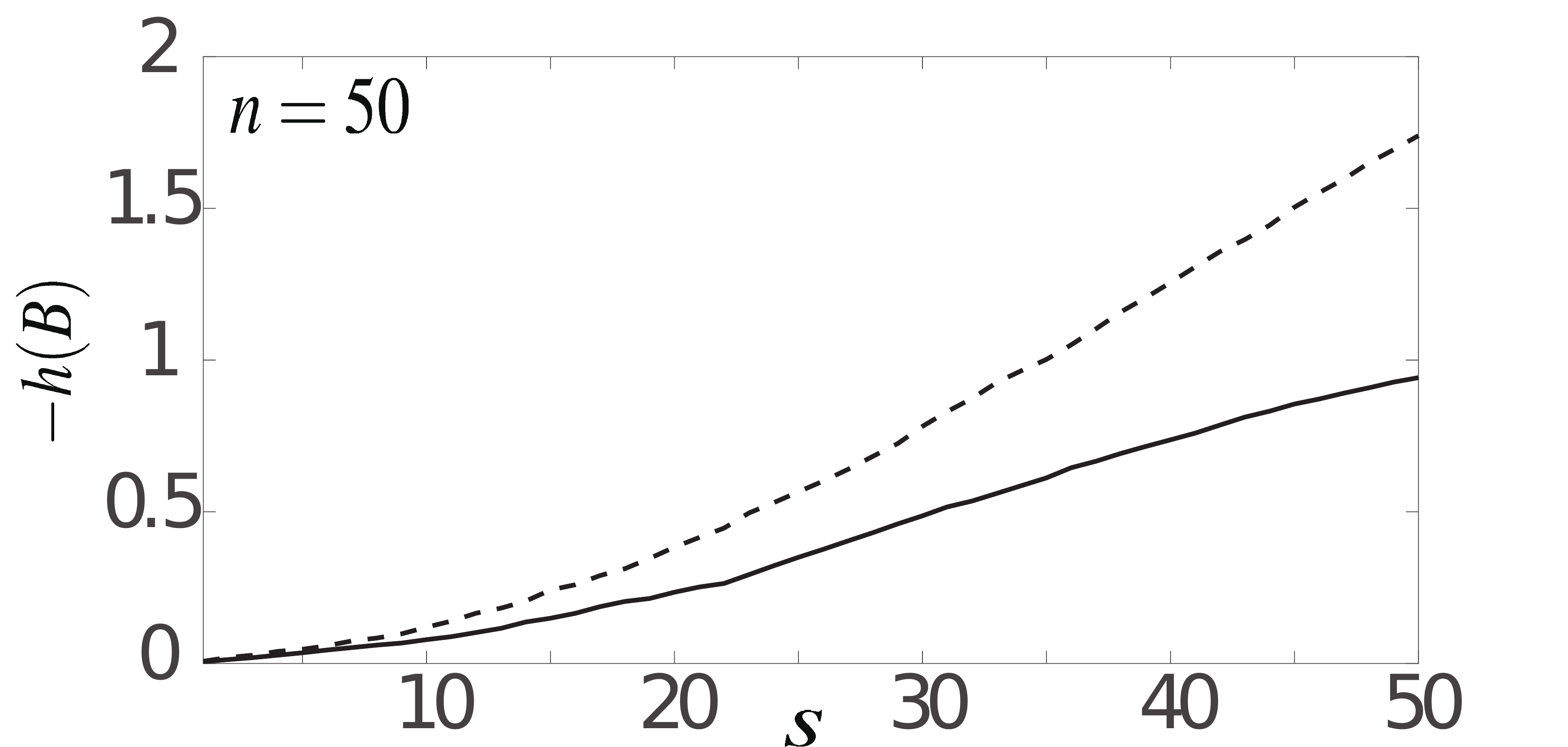}\\
\includegraphics[width = 4.5cm, height = 3cm]{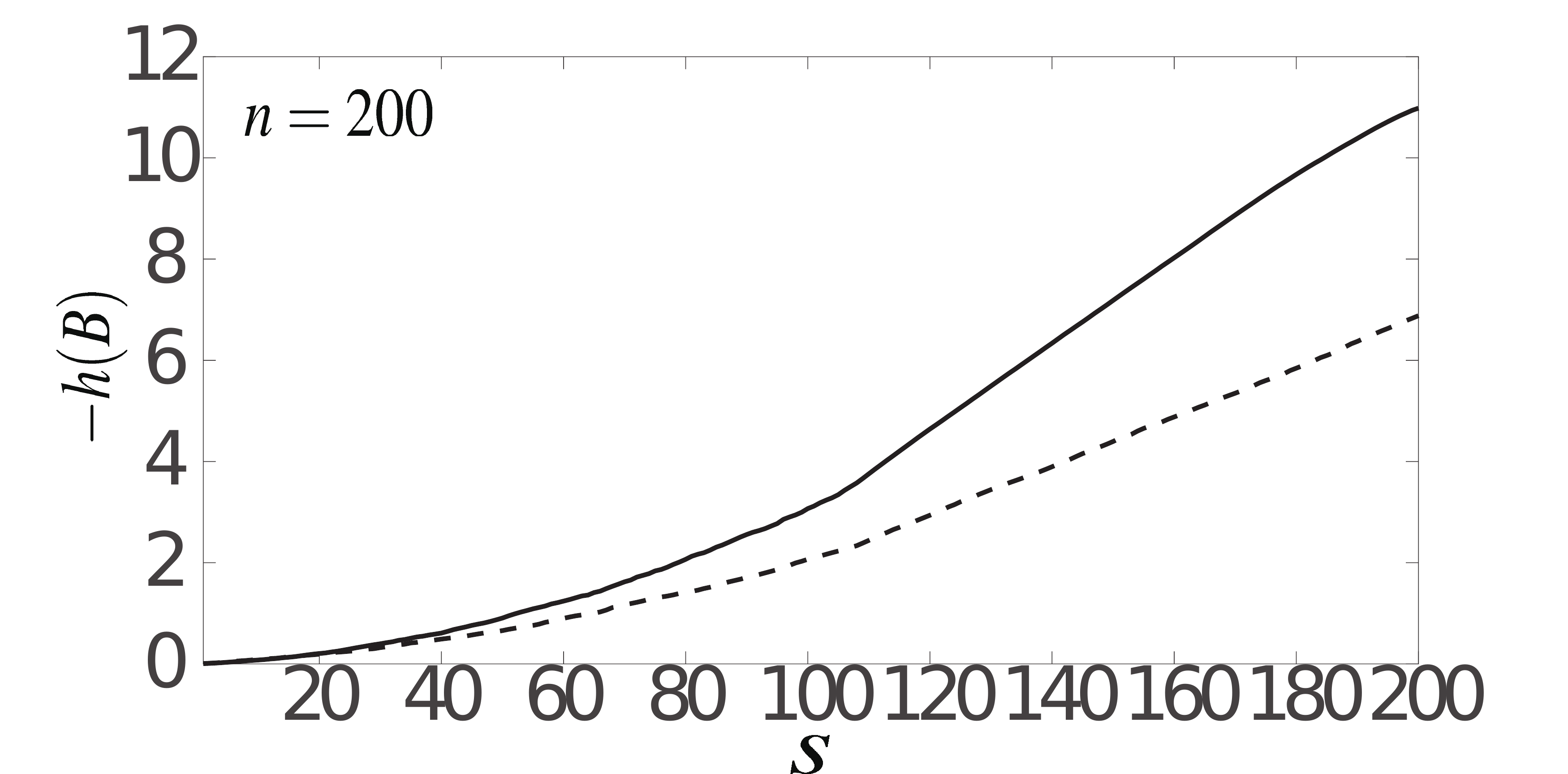}
\end{center}
\end{minipage} &
\begin{minipage}{0.45\hsize}
\begin{center}
\includegraphics[width = 4.5cm, height = 3cm]{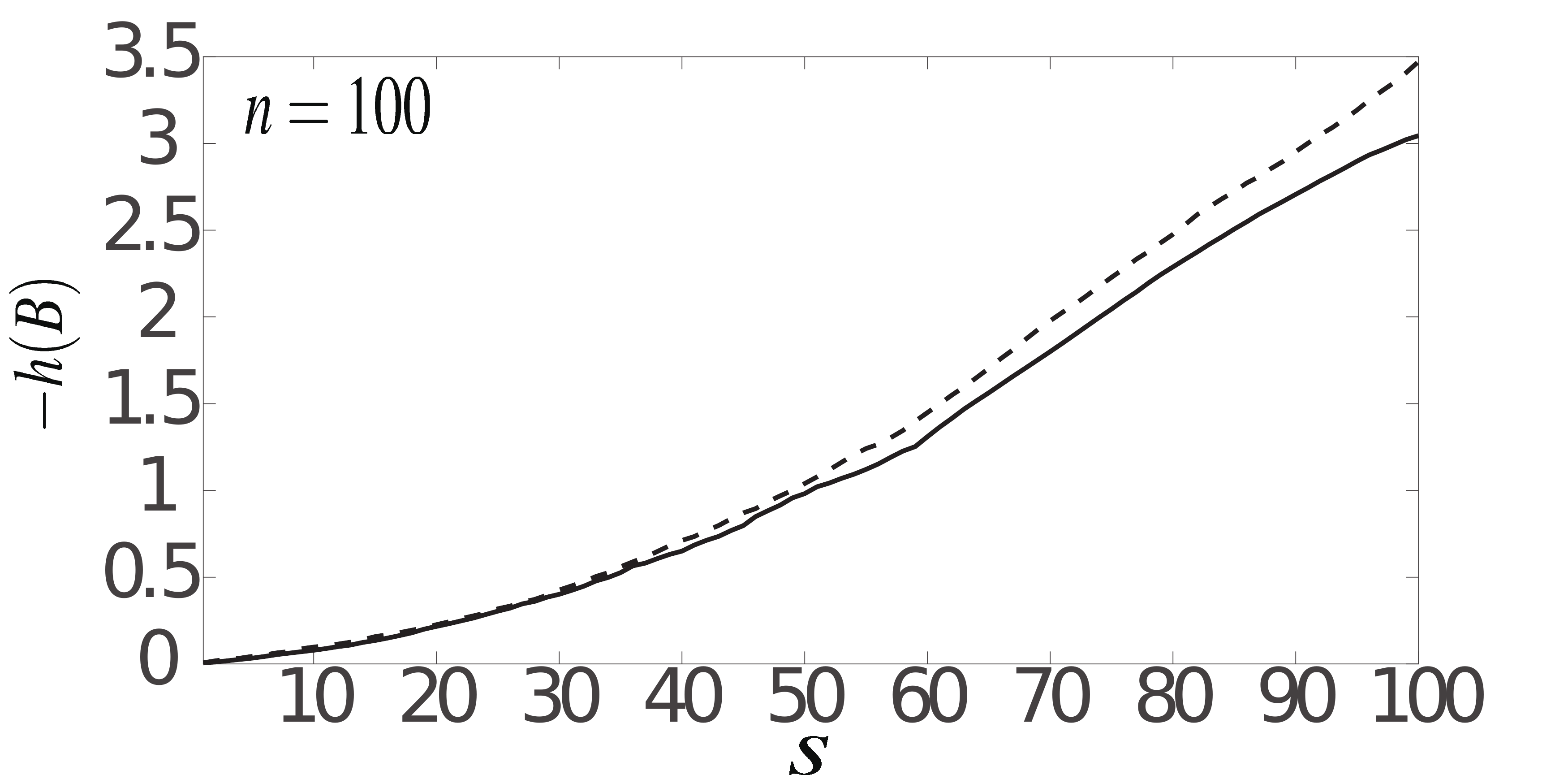}\\
\includegraphics[width = 4.5cm, height = 3cm]{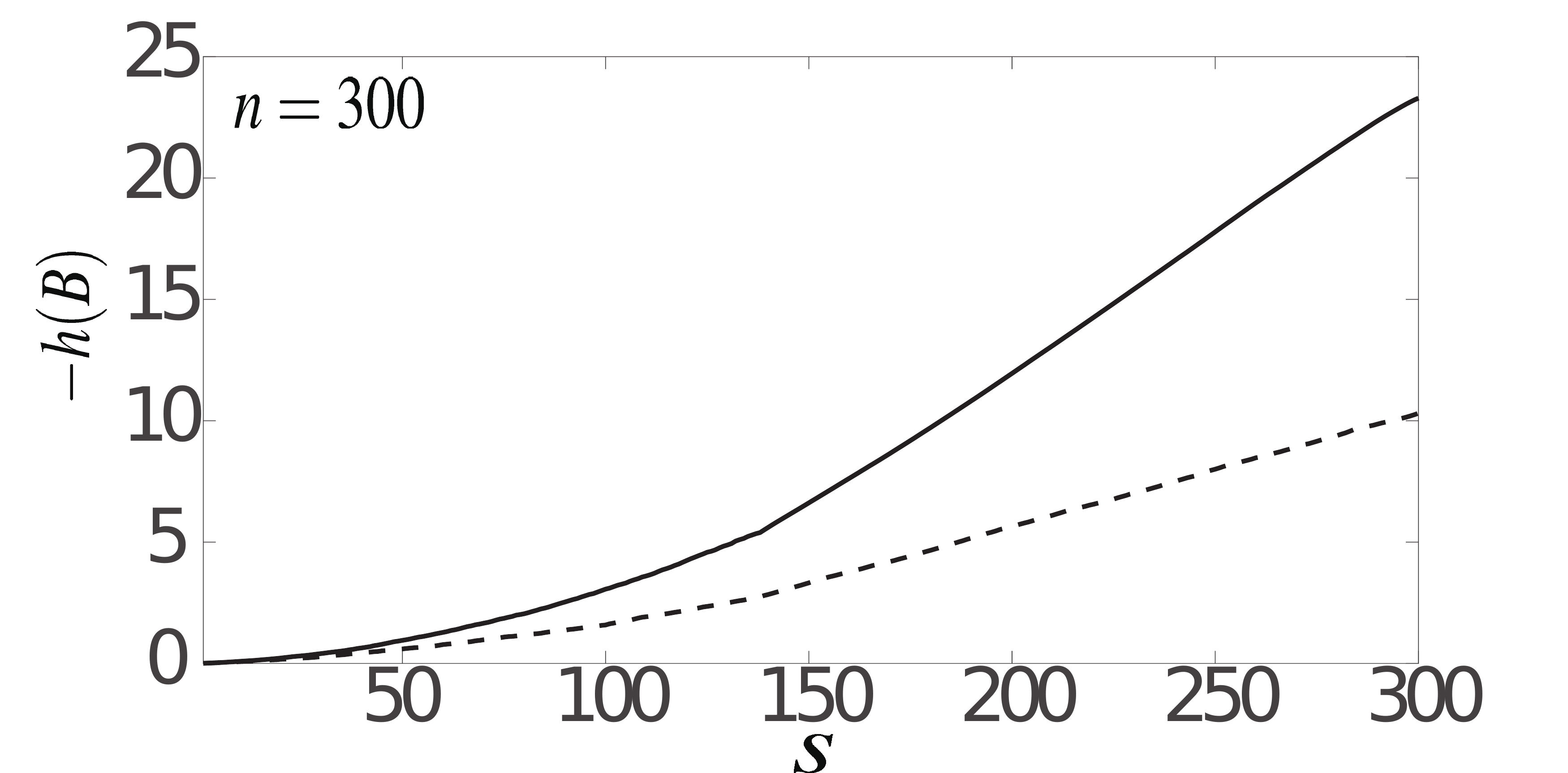}
\end{center}
\vspace{-1mm}
\end{minipage}
\end{tabular}
\caption{The relations between controllability index $-h(B)$ and sparsity parameter $s$ in Problem 2. The top-left and top-right figures show the simulation results in $n=50$ and $n=100$, respectively. The bottom-left and bottom-right figures show the simulation results in $n=200$ and $n=300$, respectively. The solid and dashed lines indicate the results of the {\rm sprand} and Watts--Strogats cases, respectively. } \label{Fig2}
\end{figure}

The following non-trivial results were obtained.
\begin{enumerate}
\item For Problems 1 and 2, the controllability characteristic changed as $s$ increased.
That is, the slope of $-h(B)$ varied for small and sufficiently large values of $s$.

\item For Problems 1 and 2, the controllability index $-h(B)$ did not saturate as $s$ increased.

\item 
In the case of $s=n$, $B$ generated by Algorithm 1 had $n$ non-zero elements for Problems 1 and 2.
That is, the controllability in terms of $-h(B)$ was maximized when all the states $x_i(t)$, $i=1,2,\ldots,n$ were stimulated by a common single input $u(t)\in {\bb R}$.
Moreover, although the elements of $B$ had values of only $1$ for Problem 2, as can be shown theoretically,
the corresponding elements in Problem 1 were vectors composed of non-trivial combinations of $1$ and $-1$.

\item For Problem 1 (Problem 2), although controllability in the {\rm sprandn} (sprand) cases were higher than that in the Watts--Strogats cases for $n=50$ and $n=100$; however,
the relations were reversed for $n=200$ and $n=300$.

\end{enumerate}

\begin{remark}
We confirmed that ${\rm rank}\, \mathcal{C}_T(B)<10$ for Problems 1 and 2 when $n=50$, $100$, $200$, and $300$.
That is, system \eqref{system} resulting from Algorithm 1 was not controllable.
Thus, $\lambda_{\min}(\mathcal{C}_T(B))=0$ and ${{\rm tr}(\mathcal{C}_T^{-1}(B))}$ could not be defined.
However, we could increase $-h(B)$.
This means that the system controllability can be increased on a low dimensional subspace,
and it is expected that the such a subspace is determined by the structure of $A$.
\end{remark}

\section{Conclusion} \label{Sec5}

We formulated two novel controllability maximization problems and developed a simple projected gradient method for solving the problems.
We proved that a sequence generated by our method has global convergence with locally linear convergence rate.
Moreover, the projections used in the proposed method were given explicitly.
Numerical experiments demonstrated the effectiveness of our method, and provided
non-trivial results.
In particular,
it is indicated that controllability characteristic changes as the parameter specifying sparsity increases, and the change rate appears to be dependent on a network structure.
The analysis of the change rates for various network structures would be considered in future work.


%


\section*{Acknowledgment}
This work was supported by Japan
Society for the Promotion of Science KAKENHI under Grant 19H04069. 



\ifCLASSOPTIONcaptionsoff
  \newpage
\fi



\bibliographystyle{IEEEtran}





%




\end{document}